\documentclass[reqno,12pt]{amsart}

\usepackage{amsmath, epsfig, cite}
\usepackage{amssymb}
\usepackage{amsfonts}
\usepackage{latexsym}
\usepackage{amsthm}

\newtheorem{theorem}{Theorem}[section]

\newtheorem{corollary}[theorem]{Corollary}
\newtheorem{conjecture}[theorem]{Conjecture}
\newtheorem{lemma}[theorem]{Lemma}

\theoremstyle{remark}

\setlength{\textwidth}{160.0mm} \setlength{\oddsidemargin}{0mm}
\setlength{\evensidemargin}{0mm} \addtolength{\topmargin}{-.1cm}
\addtolength{\textheight}{.2cm}

\numberwithin{equation}{section}

\allowdisplaybreaks[1]

\author{Victor J.\ W.\ Guo}
\address{School of Mathematics and Statistics, Huaiyin Normal University,
Huai'an 223300, Jiangsu, People's Republic of China}
\email{jwguo@hytc.edu.cn}
\thanks{The author was partially supported by the National Natural
Science Foundation of China (grant 11771175).}

\title[]
{$q$-Analogues of some supercongruences related to Euler numbers}

\subjclass[2020]{11B65, 11A07, 33F10}

\keywords{supercongruences; $q$-congruences;
$q$-WZ pair; $q$-analogue of Wolstenholme's congruence; $q$-analogue of Morley's congruence}

\begin{document}

\begin{abstract} Let $E_n$ be the $n$-th Euler number and $(a)_n=a(a+1)\cdots (a+n-1)$ the rising factorial. Let $p>3$ be a prime.
In 2012, Sun proved the that
$$
\sum^{(p-1)/2}_{k=0}(-1)^k(4k+1)\frac{(\frac{1}{2})_k^3}{k!^3} \equiv p(-1)^{(p-1)/2}+p^3E_{p-3} \pmod{p^4},
$$
which is a refinement of a famous supercongruence of Van Hamme.
In 2016, Chen, Xie, and He established the following result:
$$
\sum_{k=0}^{p-1}(-1)^k (3k+1)\frac{(\frac{1}{2})_k^3}{k!^3} 2^{3k} \equiv p(-1)^{(p-1)/2}+p^3E_{p-3} \pmod{p^4},
$$
which was originally conjectured by Sun. In this paper we give $q$-analogues of the above two supercongruences by employing the $q$-WZ method.
As a conclusion, we provide a $q$-analogue of the following supercongruence of Sun:
$$
\sum_{k=0}^{(p-1)/2}\frac{(\frac{1}{2})_k^2}{k!^2}
\equiv (-1)^{(p-1)/2}+p^2 E_{p-3} \pmod{p^3}.
$$
\end{abstract}

\maketitle

\section{Introduction}
In 1914, Ramanujan \cite{Ramanujan} gave a number of rapidly convergent series of $1/\pi$.  Although the following series, due to Bauer \cite{Bauer},
is not listed in \cite{Ramanujan}, it gives an example of this kind:
\begin{align}
\sum^{\infty}_{k=0}(-1)^k(4k+1)\frac{(\frac{1}{2})_k^3}{k!^3} =\frac{2}{\pi}, \label{eq:Ramanujan}
\end{align}
where $(a)_k=a(a+1)\cdots (a+k-1)$ denotes the rising factorial.
Ramanujan's formulas for $1/\pi$ got widely admired in 1980's
when they were discovered to offer efficient algorithms for
calculating decimal digits of $\pi$. See the monograph \cite{BB} of the Borwein brothers.
For a recent proof of Ramanujan's series, see Guillera \cite{Guillera}.

In 1997, Van Hamme \cite{Hamme} developed interesting $p$-adic analogues of Ramanujan-type series. In particular, he conjectured the following supercongruence
corresponding to \eqref{eq:Ramanujan}:
\begin{align}
\sum^{(p-1)/2}_{k=0}(-1)^k(4k+1)\frac{(\frac{1}{2})_k^3}{k!^3} \equiv p(-1)^{(p-1)/2} \pmod{p^3},\label{eq:VAN}
\end{align}
where $p$ is an odd prime. Note that we may calculate the sum in \eqref{eq:VAN} for $k$ up to $p-1$,
since the $p$-adic order of $(\frac{1}{2})_k/k!$ is $1$ for $k$ in the range $(p+1)/2\leqslant k\leqslant p-1$.
Congruences of this kind are called  Ramanujan-type supercongruences. The congruence \eqref{eq:VAN} was first proved by Mortenson \cite{Mortenson} in 2008
using a $_6F_5$ transformation and  the $p$-adic Gamma function, and received a WZ (Wilf--Zeilberger \cite{WZ1,WZ2}) proof  by Zudilin \cite{Zudilin} shortly afterwards.

In 2012, also employing the WZ method, Sun \cite{Sun}
gave the following refinement of \eqref{eq:VAN}: for any prime $p>3$,
\begin{align}
\sum^{m}_{k=0}(-1)^k(4k+1)\frac{(\frac{1}{2})_k^3}{k!^3} \equiv p(-1)^{(p-1)/2}+p^3E_{p-3} \pmod{p^4},\label{eq:Sun}
\end{align}
where $m=p-1$ or $(p-1)/2$, and $E_{p-3}$ is the $(p-3)$-th Euler number.

In recent years, $q$-analogues (or rational function generalizations) of congruences and supercongruences have
aroused the interest of many researchers (see \cite{Guo2018,Guo-t,Guo-a2,Guo-div,Guo-mod4,GPZ,GS3,GS,GZ14,GuoZu,LW,Liu,LP,NP,Straub2,WY0,Zu19}).
For instance, the author \cite{Guo2018} gave the following $q$-analogue of \eqref{eq:VAN}: for any odd integer $n>1$,
\begin{align*}
\sum_{k=0}^{(n-1)/2}(-1)^k q^{k^2} [4k+1]\frac{(q;q^2)_k^3}{(q^2;q^2)_k^3}
\equiv [n]q^{(n-1)^2/4} (-1)^{(n-1)/2}\pmod{[n]\Phi_n(q)^2},  
\end{align*}
Here we need to be familiar with the standard $q$-notation.  The {\it $q$-integer} is defined by $[n]=1+q+\cdots+q^{n-1}$,
and the {\it $q$-shifted factorial} is defined as $(a;q)_n=(1-a)(1-aq)\cdots (1-aq^{n-1})$ for $n\geqslant 1$ and $(a;q)_0=1$.
Moreover, the $n$-th {\it cyclotomic polynomial} $\Phi_n(q)$ is given by
\begin{align*}
\Phi_n(q)=\prod_{\substack{1\leqslant k\leqslant n\\ \gcd(k,n)=1}}(q-\zeta^k),
\end{align*}
where $\zeta$ is an $n$-th primitive root of unity. It is well known that $\Phi_n(q)$ is an irreducible polynomial in $\mathbb{Z}[q]$.

For two rational functions  $A(q)$ and $B(q)$ in $q$ and a polynomial $P(q)$ in $q$ with integer coefficients, we say that $A(q)$ is congruent to $B(q)$
modulo $P(q)$, denoted by $A(q)\equiv B(q)\pmod{P(q)}$,
if $P(q)$ divides the numerator of the reduced form of $A(q)-B(q)$
in the polynomial ring $\mathbb{Z}[q]$.

In this paper, we shall give a $q$-analogue of \eqref{eq:Sun}.
\begin{theorem}\label{thm:1}
Let $n$ be a positive odd integer. Then
\begin{align}
\sum_{k=0}^{N}
(-1)^k q^{k^2} [4k+1]\frac{(q;q^2)_k^3}{(q^2;q^2)_k^3}
&\equiv
 (-1)^{(n-1)/2}q^{(1-n^2)/4}
\left(q^{n\choose 2}[n]+\frac{(n^2-1)(1-q)^2}{24}[n]^3\right) \notag \\
&\quad+[n]^3\sum_{k=1}^{(n-1)/2}\frac{q^k (q^2;q^2)_{k}}{[2k][2k-1](q;q^2)_{k}} \pmod{[n]\Phi_n(q)^3}, \label{eq:main-1}
\end{align}
where $N=(n-1)/2$ or $n-1$.
\end{theorem}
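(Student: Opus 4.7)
The plan is to employ the $q$-WZ method, in the spirit of Sun's proof of the classical supercongruence \eqref{eq:Sun} and of the $q$-analogues developed in \cite{Guo2018,GuoZu}. First I would search for a $q$-WZ pair: a rational function $F(m,k)$ depending on an auxiliary integer parameter $m$, which at $m=0$ specializes to the summand of the left-hand side of \eqref{eq:main-1}, together with a companion $G(m,k)$ satisfying
$$F(m+1,k)-F(m,k)=G(m,k+1)-G(m,k).$$
A suitable deformation $F(m,k)$ may be built by inserting factors of the form $(q^{1-2m};q^2)_k(q^{1+2m};q^2)_k$; the certificate $G$ is then produced by the $q$-Gosper/Zeilberger algorithm. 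Summing the telescoping identity over $k$ from $0$ to $(n-1)/2$ turns it into a first-order recursion in $m$ for the partial sums, whose iteration, together with a closed-form evaluation at a favourable value of $m$, expresses the sum of interest as a boundary value $G(m,(n+1)/2)-G(m,0)$.

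Second I would handle the extension from $N=(n-1)/2$ to $N=n-1$ by proving that every term of the tail
$$\sum_{k=(n+1)/2}^{n-1}(-1)^kq^{k^2}[4k+1]\frac{(q;q^2)_k^3}{(q^2;q^2)_k^3}$$
is divisible by $[n]\Phi_n(q)^3$. For $k\geqslant(n+1)/2$ the numerator factor $(q;q^2)_k^3$ contains $(1-q^n)^3=[n]^3(1-q)^3$, while the denominator $(q^2;q^2)_k^3$ carries no factor of $\Phi_n(q)$ because $n$ is odd and $k\leqslant n-1$; each tail term is therefore automatically divisible by $[n]^3$. To promote the divisibility from $[n]^3$ to $[n]\Phi_n(q)^3$ I would pair $k$ with $n-1-k$ and invoke the $q^n\equiv 1$ reflection formulas for $(q;q^2)_{n-1-k}$ and $(q^2;q^2)_{n-1-k}$ modulo $\Phi_n(q)$, so that the two paired terms cancel to one additional order in $\Phi_n(q)$.

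Third, the technical core is to expand the closed form produced by the telescoping modulo $\Phi_n(q)^3$. The leading order reproduces the Van Hamme term $(-1)^{(n-1)/2}q^{(1-n^2)/4+\binom{n}{2}}[n]$. The next order yields $(n^2-1)(1-q)^2[n]^3/24$, the constant $(n^2-1)/24$ being the classical one appearing in the $q$-analogue of Wolstenholme's congruence and arising here from the quadratic term in the $\Phi_n(q)$-expansion of $(q;q^2)_{(n-1)/2}^3/(q^2;q^2)_{(n-1)/2}^3$. The third-order correction should collapse to $[n]^3\sum_{k=1}^{(n-1)/2}q^k(q^2;q^2)_k/([2k][2k-1](q;q^2)_k)$, a $q$-analogue of a classical expression for $E_{p-3}\pmod{p}$. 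The main obstacle will be precisely this third-order identification: the WZ certificate produces a somewhat unwieldy rational function, and rewriting the resulting sum in the stated compact form requires a non-trivial telescoping or secondary-WZ argument combined with the cyclotomic reduction $q^n\equiv 1\pmod{\Phi_n(q)}$.

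A complementary point is the ``$[n]$-part'' of the modulus: since $[n]\Phi_n(q)^3=\Phi_n(q)^4\prod_{d\mid n,\,1<d<n}\Phi_d(q)$, one must also verify the congruence modulo each $\Phi_d(q)$ with $1<d<n$. For those proper divisors only first-order divisibility is required, and it should follow from the same telescoping identity evaluated at an appropriate primitive $d$-th root of unity.
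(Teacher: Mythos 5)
Your plan correctly identifies the overall strategy (a $q$-WZ pair plus $q$-Wolstenholme/Morley-type expansions), but as written it has two genuine gaps, and the part you yourself flag as "the main obstacle" is precisely the content of the theorem. In the paper's proof the WZ pair is written down explicitly, with $F(m,0)$ equal to the summand, and the double telescoping yields the exact identity $\sum_{m=0}^{(n-1)/2}F(m,0)=F\bigl(\tfrac{n-1}{2},\tfrac{n-1}{2}\bigr)+\sum_{k=1}^{(n-1)/2}G\bigl(\tfrac{n+1}{2},k\bigr)$. The Euler-number sum then does not arise as a mysterious "third-order correction" to be extracted afterwards: each boundary term $G\bigl(\tfrac{n+1}{2},k\bigr)$ reduces, term by term and modulo $\Phi_n(q)^4$, to $q^k[n]^3(q^2;q^2)_k/([2k][2k-1](q;q^2)_k)$, while the diagonal value $F\bigl(\tfrac{n-1}{2},\tfrac{n-1}{2}\bigr)$ equals $[n]{2n-1\brack n-1}{n-1\brack (n-1)/2}_{q^2}/(-q;q)_{n-1}^2$ and is evaluated by the $q$-Wolstenholme and $q$-Morley congruences. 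Without exhibiting a pair for which the boundary terms collapse this cleanly, your outline leaves the decisive computation undone; "a non-trivial telescoping or secondary-WZ argument" is not a proof of it.

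The second gap is the passage from $N=(n-1)/2$ to $N=n-1$. Your observation that each tail term carries $(1-q^n)^3$ and hence $[n]^3$ is correct, but $[n]^3$ supplies only $\Phi_n(q)^3$, and the missing fourth power of $\Phi_n(q)$ cannot be obtained by pairing $k$ with $n-1-k$: for $k$ in the tail range $(n+1)/2\leqslant k\leqslant n-1$ the index $n-1-k$ lies in $0\leqslant n-1-k\leqslant (n-3)/2$, i.e.\ in the head of the sum, so this pairing says nothing about the tail alone. Reducing each tail term modulo $\Phi_n(q)^4$ shows that what is actually required is the vanishing modulo $\Phi_n(q)$ of a genuinely new sum (roughly $\sum_{j\geqslant 1}(-1)^jq^{j^2-j}[4j-1](q^2;q^2)_{j-1}^3/(q;q^2)_j^3$), and no term-by-term cancellation is visible. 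The paper sidesteps this entirely by quoting a parametric $q$-supercongruence proved by creative microscoping (\cite[Theorem 6.1]{Guo-mod4}), whose specialization $a=1$, $b\to\infty$ gives the equivalence of the two partial sums modulo $\Phi_n(q)^4$ directly; combined with the divisibility by $[n]$ from \cite[Theorem 4.1]{GuoZu} this handles both the tail and the proper-divisor part of the modulus. Your final remark about checking the congruence at primitive $d$-th roots of unity for $d\mid n$, $1<d<n$, is sound in principle and corresponds to this last step.
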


Note that Sun \cite[Equation (3.1)]{Sun2011} proved that, for any odd prime $p$,
$$
\sum_{k=1}^{(p-1)/2}\frac{4^k}{k(2k-1){2k\choose k}}\equiv 2 E_{p-3} \pmod{p}.
$$
Letting $n=p$ be a prime greater than $3$ and taking $q\to 1$ in \eqref{eq:main-1}, we immediately get \eqref{eq:Sun}.

Still using the WZ method, Guillera and Zudilin \cite{GuZu} established the following supercongruence: for odd primes $p$,
\begin{align}
\sum_{k=0}^{(p-1)/2}(-1)^k (3k+1)\frac{(\frac{1}{2})_k^3}{k!^3} 2^{3k} \equiv p(-1)^{(p-1)/2}\pmod{p^3}.  \label{eq:div-3}
\end{align}
Moreover, in 2016, Chen, Xie, and He \cite{CXH} gave the following refinement of \eqref{eq:div-3}:
\begin{align}
\sum_{k=0}^{p-1}(-1)^k (3k+1)\frac{(\frac{1}{2})_k^3}{k!^3} 2^{3k} \equiv p(-1)^{(p-1)/2}+p^3E_{p-3} \pmod{p^4}, \label{eq:div-3-2}
\end{align}
which was originally conjectured by Sun \cite[Conjecture 5.1]{Sun0}. The author \cite{Guo-div} established a $q$-analogue of \eqref{eq:div-3}:
\begin{align}
\sum_{k=0}^{n-1}(-1)^k [3k+1]\frac{(q;q^2)_k^3}{(q;q)_k^3}
\equiv [n]q^{(n-1)^2/4} (-1)^{(n-1)/2} \pmod{[n]\Phi_n(q)^2}. \label{eq:q-Zudilin-3}
\end{align}
We point out that a supercongruence for the left-hand side of \eqref{eq:div-3} modulo $p^4$,
also conjectured by Sun \cite[Conjecture 5.1]{Sun0}, was recently confirmed by Mao \cite{Mao}.

In this paper, we shall also establish a $q$-analogue of \eqref{eq:div-3-2}.
\begin{theorem}\label{thm:2}
Let $n$ be a positive odd integer. Then
\begin{align}
\sum_{k=0}^{n-1} (-1)^k [3k+1]\frac{(q;q^2)_k^3}{(q;q)_k^3}
&\equiv
 (-1)^{(n-1)/2}q^{(1-n^2)/4}
\left(q^{n\choose 2}[n]+\frac{(n^2-1)(1-q)^2}{24}[n]^3\right) \notag \\
&\quad+[n]^3\sum_{k=1}^{(n-1)/2}\frac{q^k (q^2;q^2)_{k}}{[2k][2k-1](q;q^2)_{k}} \pmod{[n]\Phi_n(q)^3}. \label{eq:main-2}
\end{align}
\end{theorem}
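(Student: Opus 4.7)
Since the right-hand sides of \eqref{eq:main-1} and \eqref{eq:main-2} coincide, and the author's previous congruence \eqref{eq:q-Zudilin-3} already establishes Theorem~\ref{thm:2} modulo $[n]\Phi_n(q)^2$, the substantive task is to sharpen the local behaviour at $\Phi_n(q)$ by one additional order. Concretely, the plan is to prove the congruence modulo $\Phi_n(q)^4$ directly and then combine it with \eqref{eq:q-Zudilin-3}, which supplies the congruence modulo $[n]/\Phi_n(q)$ for free: both the $(n^2-1)(1-q)^2[n]^3/24$ correction and the $[n]^3$-multiple of the Euler-type sum vanish modulo $[n]$.

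To reach modulus $\Phi_n(q)^4$ I would refine the $q$-WZ argument of \cite{Guo-div}. The idea is to introduce a two-variable WZ pair $(F(a,k),G(a,k))$ with
\[
F(aq,k)-F(a,k)=G(a,k+1)-G(a,k),
\]
in which $F(a,k)$ degenerates, after setting $a=q^n$ and collecting signs, to the summand $(-1)^k[3k+1](q;q^2)_k^3/(q;q)_k^3$. Telescoping the recurrence over $0\leqslant k\leqslant n-1$ rewrites the sum as a boundary quantity built from $G(q^n,n)$ and $G(q^n,0)$. Expanding $q^n=1+\Phi_n(q)\cdot u$ (with $u$ a unit at a primitive $n$-th root of unity) inside this boundary expression up to third order and applying $q$-analogues of Wolstenholme's and Morley's congruences should produce, in order: the leading term reproducing \eqref{eq:q-Zudilin-3}, the second-order Wolstenholme correction $(-1)^{(n-1)/2}q^{(1-n^2)/4}\cdot(n^2-1)(1-q)^2[n]^3/24$, and a third-order piece which must be shown to collapse onto the Euler-type sum $[n]^3\sum_{k=1}^{(n-1)/2}q^k(q^2;q^2)_k/([2k][2k-1](q;q^2)_k)$ via a $q$-analogue of the partial fraction identity $1/(2k(2k-1))=1/(2k-1)-1/(2k)$.

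The main obstacle will be the third-order calculation: the combinatorial accounting required to reduce the boundary-value expansion to the precise Euler-type sum with denominators $[2k][2k-1]$ is delicate, and any non-canonical choice of $q$-WZ companion will scramble the Wolstenholme and Euler contributions with boundary debris. The silver lining is that the same expansion controls Theorem~\ref{thm:1}, so the delicate step need only be executed once: the two proofs run in tight parallel, linked through the elementary rewriting $(q;q^2)_k^3/(q;q)_k^3=1/(q^2;q^2)_k^3$ that relates the summands on the left-hand sides of \eqref{eq:main-1} and \eqref{eq:main-2}.
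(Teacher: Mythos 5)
Your reduction step is sound as far as it goes: since $[n]\Phi_n(q)^3=\bigl([n]/\Phi_n(q)\bigr)\Phi_n(q)^4$ with coprime factors, it would suffice to prove \eqref{eq:main-2} modulo $\Phi_n(q)^4$ and to check separately that both sides agree modulo $[n]$ (the latter is not automatic: showing that $[n]^3\sum_{k}q^k(q^2;q^2)_k/([2k][2k-1](q;q^2)_k)$ is divisible by $[n]$ despite the denominators requires an argument such as the Chen--Hou factorization of $q$-binomial coefficients, which you assert rather than supply). The real problem is that everything after this reduction is a plan, not a proof. You never exhibit the two-variable pair $(F(a,k),G(a,k))$, and the two decisive steps --- extracting the correction $(-1)^{(n-1)/2}q^{(1-n^2)/4}(n^2-1)(1-q)^2[n]^3/24$ and showing that the third-order boundary contribution ``collapses onto'' the Euler-type sum --- are precisely the content of the theorem and are left at the level of ``should produce'' and ``must be shown.'' For comparison, the paper works with the explicit $q$-WZ pair of \cite{Guo-div} in the integer variables $(m,k)$, telescopes first in $m$ and then in $k$ to obtain $\sum_{m=0}^{n-1}F(m,0)=\sum_{k=1}^{(n+1)/2}G(n,k)$, and then evaluates this boundary sum: the $k=(n+1)/2$ term gives the main term via the $q$-Morley and $q$-Wolstenholme congruences (Lemmas \ref{lem:first}--\ref{lem:third}), while the remaining terms reduce, through the congruences $(q;q^2)_{n-k}/(q^2;q^2)_{n-k}\equiv -q^k(1-q^n)(q^2;q^2)_{k-1}/(q;q^2)_{k}\pmod{\Phi_n(q)^2}$ and ${2n-2k+1\brack n}\equiv 1\pmod{\Phi_n(q)}$, to exactly the Euler-type sum. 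None of that computation appears in your write-up.

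There is also a concrete error in the claimed link between the two theorems: the identity $(q;q^2)_k^3/(q;q)_k^3=1/(q^2;q^2)_k^3$ is false. Since $(q;q)_k(-q;q)_k=(q^2;q^2)_k$, one has $(q;q^2)_k^3/(q;q)_k^3=(q;q^2)_k^3(-q;q)_k^3/(q^2;q^2)_k^3$, and the extra factor $(-q;q)_k^3$ is the $q$-analogue of the $2^{3k}$ appearing in \eqref{eq:div-3-2}; moreover the summand of \eqref{eq:main-2} carries no $(-1)^kq^{k^2}$. So the two left-hand sides are not related by a rewriting of summands, and the proof of Theorem \ref{thm:2} cannot simply be run ``in tight parallel'' with that of Theorem \ref{thm:1}: it needs its own $q$-WZ pair and its own boundary evaluation, as the paper carries out. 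The coincidence of the two right-hand sides is a consequence of the two theorems (it is the content of \eqref{eq:q4k-3k}), not an input one may use to derive one from the other.
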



The remainder of the paper proceeds as follows. We prove Theorems \ref{thm:1} and \ref{thm:2}
in Sections 2 and 3, respectively, by using the $q$-WZ method, together with  a $q$-analogue of Wolstenholme's congruence and a $q$-analogue of Morley's congruence.
In Section 4, we give some concluding remarks and two open
problems. Particularly in Corollary \ref{cor:one}, using a recent result of Wei \cite{Wei},  we shall deduce a $q$-analogue of another supercongruence related to
Euler numbers of Sun from Theorem \ref{thm:1}.

\section{Proof of Theorem \ref{thm:1}}
Recall that the {\it $q$-binomial coefficients} ${M\brack N}$
are defined by
$$
{M\brack N}={M\brack N}_q
=\begin{cases}\displaystyle\frac{(q;q)_M}{(q;q)_N(q;q)_{M-N}} &\text{if $0\leqslant N\leqslant M$,} \\[10pt]
0 &\text{otherwise.}
\end{cases}
$$
We need the following $q$-analogue of Wolstenholme's congruence (see \cite[Lemma 3.1]{GW}).
\begin{lemma}\label{lem:first}
Let $n$ be a positive integer. Then
\begin{align*}
{2n-1\brack n-1} \equiv (-1)^{n-1}q^{n\choose 2}+\frac{(n^2-1)(1-q)^2}{12}[n]^2
 \pmod{\Phi_n(q)^3}.  
\end{align*}
\end{lemma}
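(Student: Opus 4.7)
The plan is to expand both sides of the claimed congruence as polynomials in $u:=1-q^n$ modulo $\Phi_n(q)^3$ and check they agree. Since $u=(1-q)[n]$ and $\Phi_n(q)\mid[n]$, the element $u$ is divisible by $\Phi_n(q)$, so $u^3\equiv 0\pmod{\Phi_n(q)^3}$ and only the $1,u,u^2$ coefficients of each side matter.

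For the left-hand side I would begin with the product formula
\[
{2n-1\brack n-1}=\prod_{k=1}^{n-1}\frac{1-q^{n+k}}{1-q^k}=\prod_{k=1}^{n-1}\Bigl(1+\frac{uq^k}{1-q^k}\Bigr),
\]
based on $1-q^{n+k}=(1-q^k)+q^k(1-q^n)$. Truncating the expansion at order $u^3$ yields
\[
{2n-1\brack n-1}\equiv 1+uS_1+u^2S_2\pmod{\Phi_n(q)^3},
\]
with $S_1=\sum_{k=1}^{n-1}q^k/(1-q^k)$ and $S_2=\sum_{1\leqslant j<k\leqslant n-1}q^{j+k}/((1-q^j)(1-q^k))$. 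The key step is to pin down $S_1$ to first order in $u$ inside $\Phi_n(q)^2$. Applying the involution $k\mapsto n-k$ and using $q^n=1-u$ yields
\[
\frac{q^{n-k}}{1-q^{n-k}}\equiv -1-\frac{q^k}{1-q^k}-\frac{uq^k}{(1-q^k)^2}\pmod{u^2},
\]
so averaging $S_1$ with its reindexed copy gives
\[
2S_1\equiv -(n-1)-u\sum_{k=1}^{n-1}\frac{q^k}{(1-q^k)^2}\pmod{\Phi_n(q)^2}.
\]
The residual sum and $S_2$ then need only be computed modulo $\Phi_n(q)$, i.e.\ at a primitive $n$-th root of unity $\zeta$. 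Using the classical identities $\sum_{k=1}^{n-1}1/(1-\zeta^k)=(n-1)/2$ and $\sum_{k=1}^{n-1}1/(1-\zeta^k)^2=-(n-1)(n-5)/12$ (both via logarithmic derivatives of $(x^n-1)/(x-1)$ at $x=1$), one finds $S_2\equiv (n-1)(n-2)/6$ and $\sum_k q^k/(1-q^k)^2\equiv -(n^2-1)/12\pmod{\Phi_n(q)}$. Combining everything,
\[
{2n-1\brack n-1}\equiv 1-\frac{n-1}{2}u+\frac{(n-1)(5n-7)}{24}u^2\pmod{\Phi_n(q)^3}.
\]

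For the right-hand side I would proceed uniformly in the parity of $n$ by setting $1-w:=(-1)^{n-1}q^{\binom{n}{2}}$. Squaring gives $(1-w)^2=q^{n(n-1)}=(1-u)^{n-1}$, so $w^2-2w\equiv -(n-1)u+\binom{n-1}{2}u^2\pmod{\Phi_n(q)^3}$; since $w\equiv 0\pmod{\Phi_n(q)}$, matching coefficients of $u$ and $u^2$ yields $w\equiv \tfrac{n-1}{2}u-\tfrac{(n-1)(n-3)}{8}u^2$. Since $(1-q)^2[n]^2=u^2$, the right-hand side reduces to $1-\tfrac{n-1}{2}u+\bigl[\tfrac{(n-1)(n-3)}{8}+\tfrac{n^2-1}{12}\bigr]u^2=1-\tfrac{n-1}{2}u+\tfrac{(n-1)(5n-7)}{24}u^2$, exactly matching the left-hand expansion. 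The main obstacle in the whole argument is the first-order refinement of $S_1$---essentially a $q$-analogue of Wolstenholme's harmonic-sum congruence---after which the remaining steps reduce to the two explicit root-of-unity sums above.
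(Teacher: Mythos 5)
Your proof is correct. Note that the paper itself gives no proof of this lemma --- it is imported verbatim from \cite[Lemma 3.1]{GW} --- so there is no internal argument to compare against; what you have written is a valid self-contained derivation, and it follows the route that is standard for $q$-Wolstenholme statements of this type. I checked the details: the factorization ${2n-1\brack n-1}=\prod_{k=1}^{n-1}\bigl(1+uq^k/(1-q^k)\bigr)$ with $u=1-q^n$ is right, the reflection $k\mapsto n-k$ does give $2S_1\equiv-(n-1)-u\sum_k q^k/(1-q^k)^2\pmod{\Phi_n(q)^2}$, the two root-of-unity evaluations $\sum_k 1/(1-\zeta^k)=(n-1)/2$ and $\sum_k 1/(1-\zeta^k)^2=-(n-1)(n-5)/12$ are the classical ones and yield $\sum_k q^k/(1-q^k)^2\equiv-(n^2-1)/12$ and $S_2\equiv(n-1)(n-2)/6$ modulo $\Phi_n(q)$, and both sides indeed reduce to $1-\tfrac{n-1}{2}u+\tfrac{(n-1)(5n-7)}{24}u^2$ modulo $\Phi_n(q)^3$. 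Your treatment of the right-hand side via $1-w=(-1)^{n-1}q^{\binom n2}$ is a nice uniform way to handle both parities of $n$; the only prerequisite there, $w\equiv0\pmod{\Phi_n(q)}$, holds because $(-1)^{n-1}\zeta^{\binom n2}=1$ for every primitive $n$-th root of unity $\zeta$ (using $\zeta^{n/2}=-1$ when $n$ is even). Two cosmetic points: the justification ``$\Phi_n(q)\mid[n]$'' fails for $n=1$, but $u=1-q$ is still divisible by $\Phi_1(q)=q-1$ and the lemma is trivial in that case; and the divisions by $2$, $8$, $24$ are harmless under the paper's notion of congruence since $\Phi_n(q)$ is a primitive polynomial, though it is worth saying so explicitly.
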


Moreover, a $q$-analogue of Morley's congruence (see \cite[(1.5)]{LPZ})
and a $q$-analogue of Fermat's little theorem (see \cite[Lemma 3.2]{GW}) will also be used in our proof.
\begin{lemma}\label{lem:second}
Let $n$ be a positive odd integer. Then, modulo $\Phi_n(q)^3$,
\begin{align*}
{n-1\brack \frac{n-1}{2}}_{q^2}
&\equiv (-1)^{(n-1)/2}q^{(1-n^2)/4}\left((-q;q)_{n-1}^2-\frac{(n^2-1)(1-q)^2}{24}[n]^2\right).
\end{align*}
\end{lemma}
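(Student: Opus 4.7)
The plan is to derive Lemma \ref{lem:second} from the $q$-analogue of Morley's congruence established in \cite[(1.5)]{LPZ}, after rewriting the central $q$-binomial coefficient into a convenient form.

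First, I would use the factorizations $(q^2;q^2)_{n-1}=(q;q)_{n-1}(-q;q)_{n-1}$ and $(q;q)_{n-1}=(q;q^2)_{(n-1)/2}(q^2;q^2)_{(n-1)/2}$ (the latter valid because $n-1$ is even) to rewrite the left-hand side as
$$
{n-1\brack (n-1)/2}_{q^2}=\frac{(-q;q)_{n-1}(q;q^2)_{(n-1)/2}}{(q^2;q^2)_{(n-1)/2}}.
$$
The problem then reduces to analyzing the ratio $(q;q^2)_{(n-1)/2}/(q^2;q^2)_{(n-1)/2}$ modulo $\Phi_n(q)^3$.

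For the leading-order match, I would apply the reflection identity $1-q^{n-(2k-1)}\equiv -q^{-(2k-1)}(1-q^{2k-1})\pmod{\Phi_n(q)}$ and note that $\{n-(2k-1):1\leqslant k\leqslant (n-1)/2\}=\{2,4,\ldots,n-1\}$; this yields
$$
(q^2;q^2)_{(n-1)/2}\equiv (-1)^{(n-1)/2}q^{-((n-1)/2)^2}(q;q^2)_{(n-1)/2}\pmod{\Phi_n(q)}.
$$
Since $((n-1)/2)^2-(1-n^2)/4=n(n-1)/2$ is divisible by $n$, this produces the phase factor $(-1)^{(n-1)/2}q^{(1-n^2)/4}$; combined with $(-q;q)_{n-1}\equiv 1\pmod{\Phi_n(q)}$ (which follows from $\prod_{k=0}^{n-1}(1+\zeta^k)=2$ for $\zeta$ a primitive $n$-th root of unity and odd $n$), one obtains the claimed congruence modulo $\Phi_n(q)$.

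The main obstacle is lifting to modulo $\Phi_n(q)^3$ and producing the precise correction $-\frac{(n^2-1)(1-q)^2}{24}[n]^2$, where one also needs to justify the specific form $(-q;q)_{n-1}^2$ rather than $(-q;q)_{n-1}$ in the leading factor. This requires a second-order expansion of each factor $1-q^{n-k}$ around $q^n=1$, tracking $q$-harmonic sums modulo $\Phi_n(q)$, entirely analogous to (but somewhat more involved than) the proof of Lemma \ref{lem:first}. The coefficient $(n^2-1)/24$ is the same universal $q$-Bernoulli-type constant appearing in Lemma \ref{lem:first}, so the two lemmas have parallel structure. Since \cite{LPZ} already performs exactly this second-order computation in an equivalent form, we rely on their result and verify that, after the algebraic rearrangement above, it matches the stated form of Lemma \ref{lem:second}.
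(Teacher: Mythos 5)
Your argument is correct and ultimately rests, just as the paper does, on citing the $q$-analogue of Morley's congruence from Liu--Pan--Zhang \cite[(1.5)]{LPZ}: the paper states this lemma with that reference and gives no independent proof, so your rewriting of ${n-1\brack (n-1)/2}_{q^2}$ and the mod-$\Phi_n(q)$ consistency check are a harmless (and correct) supplement to essentially the same approach. The only thing to note is that the genuinely hard part --- the second-order correction $-\frac{(n^2-1)(1-q)^2}{24}[n]^2$ modulo $\Phi_n(q)^3$ --- is outsourced to \cite{LPZ} in both your write-up and the paper, so no gap arises relative to the paper's own treatment.
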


\begin{lemma}\label{lem:third}
Let $n$ be a positive odd integer. Then
\begin{align}
(-q;q)_{n-1}\equiv 1\pmod{\Phi_n(q)}.  \label{eq:phi}
\end{align}
\end{lemma}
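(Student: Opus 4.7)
The plan is to reduce the product $(-q;q)_{n-1} = \prod_{k=1}^{n-1}(1+q^k)$ to a quotient of more familiar $q$-Pochhammer symbols and then exploit the fact that $q^n \equiv 1 \pmod{\Phi_n(q)}$. Specifically, the standard identity $1+q^k = (1-q^{2k})/(1-q^k)$ yields
\begin{align*}
(-q;q)_{n-1} = \prod_{k=1}^{n-1}(1+q^k) = \prod_{k=1}^{n-1}\frac{1-q^{2k}}{1-q^k} = \frac{(q^2;q^2)_{n-1}}{(q;q)_{n-1}},
\end{align*}
so it suffices to show that the numerator and denominator on the right are congruent modulo $\Phi_n(q)$ (noting that $(q;q)_{n-1}$ is coprime to the irreducible polynomial $\Phi_n(q)$, since $\Phi_n(q)$ does not divide any $1-q^j$ for $1\leqslant j\leqslant n-1$, hence is invertible mod $\Phi_n(q)$).

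For the key congruence, observe that $\Phi_n(q)$ divides $q^n-1$, so $q^n\equiv 1\pmod{\Phi_n(q)}$, and hence $q^{2k}\equiv q^{2k\bmod n}\pmod{\Phi_n(q)}$. Because $n$ is odd we have $\gcd(2,n)=1$, so the map $k\mapsto 2k\bmod n$ is a bijection on $\{1,2,\ldots,n-1\}$. Therefore
\begin{align*}
(q^2;q^2)_{n-1}=\prod_{k=1}^{n-1}(1-q^{2k})\equiv \prod_{j=1}^{n-1}(1-q^{j})=(q;q)_{n-1}\pmod{\Phi_n(q)},
\end{align*}
and dividing by the invertible factor $(q;q)_{n-1}$ gives the claim.

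The only subtle point is the legitimacy of the division step, but that is immediate from the irreducibility of $\Phi_n(q)$ over $\mathbb{Z}[q]$ together with the fact that its roots are primitive $n$-th roots of unity and so are not roots of any $1-q^j$ with $j<n$. Alternatively, one can bypass the quotient entirely by evaluating at a primitive $n$-th root of unity $\zeta$: from $\prod_{k=1}^{n-1}(x-\zeta^k)=1+x+\cdots+x^{n-1}$, substituting $x=-1$ and using that $n$ is odd yields $\prod_{k=1}^{n-1}(-1-\zeta^k)=1$, and since $n-1$ is even this gives $\prod_{k=1}^{n-1}(1+\zeta^k)=1$, which is exactly the desired congruence. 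I expect no real obstacle here; the proof is essentially a one-line bijection argument, and the only care needed is to record the invertibility of $(q;q)_{n-1}$ modulo $\Phi_n(q)$ when passing through the quotient form.
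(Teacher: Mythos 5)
Your proof is correct. Note that the paper itself does not prove this lemma at all --- it simply cites \cite[Lemma 3.2]{GW} --- so there is no internal argument to compare against; your self-contained derivation is welcome. Both of your routes are sound: the bijection $k\mapsto 2k\bmod n$ on $\{1,\dots,n-1\}$ (valid precisely because $n$ is odd) together with the invertibility of $(q;q)_{n-1}$ modulo the irreducible $\Phi_n(q)$ gives the quotient argument, and the evaluation $\prod_{k=1}^{n-1}(1+\zeta^k)=(-1)^{n-1}\sum_{j=0}^{n-1}(-1)^j=1$ at a primitive root $\zeta$ is an even quicker finish, since $\Phi_n$ is the minimal polynomial of $\zeta$ and Gauss's lemma upgrades divisibility in $\mathbb{Q}[q]$ to $\mathbb{Z}[q]$. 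The only point worth recording explicitly, which you do, is that $\Phi_n(q)$ divides no $1-q^j$ with $1\leqslant j\leqslant n-1$, so the division step is legitimate.
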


\begin{proof}[Proof of Theorem {\rm\ref{thm:1}}]
By \cite[Theorem 6.1]{Guo-mod4}, modulo $[n]\Phi_n(q)(1-aq^n)(a-q^n)$,
\begin{align}
&\sum_{k=0}^{(n-1)/2}[4k+1]\frac{(aq;q^2)_k (q/a;q^2)_k (q/b;q^2)_k (q;q^2)_k}
{(aq^2;q^2)_k(q^2/a;q^2)_k (bq^2;q^2)_k (q^2;q^2)_k}b^k \notag\\[5pt]
&\quad\equiv \sum_{k=0}^{n-1}[4k+1]\frac{(aq;q^2)_k (q/a;q^2)_k (q/b;q^2)_k (q;q^2)_k}
{(aq^2;q^2)_k(q^2/a;q^2)_k (bq^2;q^2)_k (q^2;q^2)_k}b^k, \label{eq:equiv}
\end{align}
where $a$ and $b$ are indeterminates.
Letting $b\to\infty$ and $a=1$ in \eqref{eq:equiv}, we get
\begin{align}
\sum_{k=0}^{(n-1)/2}(-1)^k q^{k^2} [4k+1]\frac{(q;q^2)_k^3}{(q^2;q^2)_k^3}
\equiv \sum_{k=0}^{n-1}(-1)^k q^{k^2} [4k+1]\frac{(q;q^2)_k^3}{(q^2;q^2)_k^3} \pmod{\Phi_n(q)^4}.  \label{eq:equiv-2}
\end{align}
By \cite[Theorem 4.1]{GuoZu}, both sides of \eqref{eq:equiv-2} are congruent to $0$ modulo $[n]$, and so
it is also true modulo $[n]\Phi_n(q)^3$. Thus, to prove Theorem \ref{thm:1}, it suffices to prove
the $N=(n-1)/2$ case.

We introduce two rational functions in $q$:
\begin{align*}
F(m,k) &=(-1)^{m+k}q^{(m-k)^2}\frac{[4m+1](q;q^2)_{m}^2(q;q^2)_{m+k}}{(q^2;q^2)_{m}^2(q^2;q^2)_{m-k}(q;q^2)_{k}^2}, \\[5pt]
G(m,k) &=\frac{(-1)^{m+k}q^{(m-k)^2}(q;q^2)_{m}^2(q;q^2)_{m+k-1}}{(1-q)(q^2;q^2)_{m-1}^2(q^2;q^2)_{m-k}(q;q^2)_{k}^2},
\end{align*}
where we assume that $1/(q^2;q^2)_{M}=0$ for negative integers $M$. As mentioned in \cite{Guo2018}, the functions $F(m,k)$ and $G(m,k)$ form a $q$-WZ pair.
Namely, they satisfy the following equality
\begin{align}
F(m,k-1)-F(m,k)=G(m+1,k)-G(m,k).  \label{eq:fnk-gnk}
\end{align}
Summing \eqref{eq:fnk-gnk} over $m$ from $0$ to $(n-1)/2$, we get
\begin{align}
\sum_{m=0}^{(n-1)/2}F(m,k-1)-\sum_{m=0}^{(n-1)/2}F(m,k)=G\left(\frac{n+1}{2},k\right)-G(0,k)=G\left(\frac{n+1}{2},k\right).  \label{eq:fnk-gn0-00}
\end{align}
Summing \eqref{eq:fnk-gn0-00} further over $k$ from $1$ to $(n-1)/2$ and noticing that $F(m,(n-1)/2)=0$ for $m<(n-1)/2$, we obtain
\begin{align}
\sum_{m=0}^{(n-1)/2}F(m,0)-F\left(\frac{n-1}{2},\frac{n-1}{2}\right)=\sum_{k=1}^{(n-1)/2}G\left(\frac{n+1}{2},k\right).  \label{eq:fnk-gn-new}
\end{align}

Note that, for $k=1,2,\ldots,(n-1)/2$, we have
\begin{align}
G\left(\frac{n+1}{2},k\right)
&=\frac{(-1)^{(n+1)/2+k}q^{((n+1)/2-k)^2}(q;q^2)_{(n+1)/2}^2(q;q^2)_{(n-1)/2+k}}{(1-q)(q^2;q^2)_{(n-1)/2}^2(q^2;q^2)_{(n+1)/2-k}(q;q^2)_{k}^2} \notag \\[5pt]
&=(-1)^{(n+1)/2+k}q^{((n+1)/2-k)^2}\frac{(1-q^n)^3(q;q^2)_{(n-1)/2}^3 (q^{n+2};q^2)_{k-1}}{(1-q)(q^2;q^2)_{(n-1)/2}^2(q^2;q^2)_{(n+1)/2-k}(q;q^2)_{k}^2}.
\label{eq:gppk}
\end{align}
Since $q^n\equiv 1\pmod{\Phi_n(q)}$, there hold
\begin{align*}
(q^2;q^2)_{(n+1)/2-k}&=\frac{(q^2;q^2)_{(n-1)/2}}{(q^{n+3-2k};q^2)_{k-1}} \\[5pt]
&\equiv \frac{(q^2;q^2)_{(n-1)/2}}{(q^{3-2k};q^2)_{k-1}}
=(-1)^{k-1}q^{(k-1)^2}\frac{(q^2;q^2)_{(n-1)/2}}{(q;q^2)_{k-1}} \pmod{\Phi_n(q)},
\end{align*}
and
\begin{align*}
\frac{(q;q^2)_{(n-1)/2}}{(q^2;q^2)_{(n-1)/2}}&=\prod_{j=1}^{(n-1)/2}\frac{1-q^{2j-1}}{1-q^{n-2j+1}} \\[5pt]
&\equiv \prod_{j=1}^{(n-1)/2}\frac{1-q^{2j-1}}{1-q^{1-2j}}=(-1)^{(n-1)/2}q^{(n-1)^2/4} \pmod{\Phi_n(q)}.
\end{align*}
Employing the above two $q$-congruences, we deduce from \eqref{eq:gppk} that, for $1\leqslant k\leqslant (n-1)/2$,
\begin{align}
G\left(\frac{n+1}{2},k\right)
\equiv \frac{q^k(1-q^n)^3 (q^{2};q^2)_{k-1}}{(1-q)(1-q^{2k-1})(q;q^2)_{k}}
=\frac{q^k[n]^3 (q^{2};q^2)_{k}}{[2k][2k-1](q;q^2)_{k}} \pmod{\Phi_n(q)^4}.
\label{eq:gppk-new}
\end{align}
Since $(q;q^2)_k/(q^2;q^2)_k={2k\brack k}/(-q;q)_k^2$ and the $q$-binomial coefficient can be written as a product of different cyclotomic polynomials
(see \cite[Lemma 1]{CH}), we see that the right-hand side of \eqref{eq:gppk-new} is congruent to $0$ modulo $[n]$, and so is \eqref{eq:gppk}.
Namely, the $q$-congruence \eqref{eq:gppk-new} holds modulo $[n]\Phi_n(q)^3$.

In addition, by Lemmas \ref{lem:first}--\ref{lem:third},
\begin{align}
&F\left(\frac{n-1}{2},\frac{n-1}{2}\right) \notag\\[5pt]
&\quad=\frac{[n]}{(-q;q)_{n-1}^2}{2n-1\brack n-1}{n-1\brack \frac{n-1}{2}}_{q^2} \notag \\[5pt]
&\quad\equiv (-1)^{(n-1)/2}q^{(1-n^2)/4}
\left\{q^{n\choose 2}[n]+\frac{(n^2-1)(1-q)^2}{24}\left(2-\frac{q^{n\choose 2}}{(-q;q)_{n-1}^2}\right)[n]^3\right\} \notag\\[5pt]
&\quad\equiv (-1)^{(n-1)/2}q^{(1-n^2)/4}
\left(q^{n\choose 2}[n]+\frac{(n^2-1)(1-q)^2}{24}[n]^3\right) \pmod{[n]\Phi_n(q)^3}, \label{eq:abcd}
\end{align}
where we have used $q^{n\choose 2}\equiv 1\pmod{\Phi_n(q)}$ for odd $n$ in the last step.

Substituting \eqref{eq:abcd} and the modulus $[n]\Phi_n(q)^3$ case of \eqref{eq:gppk-new} into \eqref{eq:fnk-gn-new},
we are led to \eqref{eq:main-1} in the case where $N$ is equal to $(n-1)/2$. This completes the proof of the theorem.
\end{proof}

\section{Proof of Theorem \ref{thm:2}}
The author \cite{Guo-div} employed the following functions
\begin{align*}
F(m,k) &=(-1)^{m}[3m-2k+1]{2m-2k\brack m}\frac{(q;q^2)_{m}(q;q^2)_{m-k} }{(q;q)_{m} (q^2;q^2)_{m-k}}, \\[5pt]
G(m,k) &=(-1)^{m+1}[m]{2m-2k\brack m-1}\frac{(q;q^2)_{m}(q;q^2)_{m-k} q^{m+1-2k} }{(q;q)_{m} (q^2;q^2)_{m-k}}
\end{align*}
to establish \eqref{eq:q-Zudilin-3}.
It is not difficult to verify that $F(m,k)$ and $G(m,k)$ satisfy the relation
\begin{align}
F(m,k-1)-F(m,k)=G(m+1,k)-G(m,k).  \label{eq:fnk-gnk-new}
\end{align}
That is, they form a $q$-WZ pair.

Since \eqref{eq:main-2} is clearly true for $n=1$, we now assume that $n\geqslant 3$.
Summing \eqref{eq:fnk-gnk-new} over $m=0,1,\ldots,n-1$, we obtain
\begin{align}
\sum_{m=0}^{n-1}F(m,k-1)-\sum_{m=0}^{n-1}F(m,k)=G(n,k).  \label{eq:fnk-gn0-0011}
\end{align}
Summing \eqref{eq:fnk-gn0-0011} further over $k=1,\ldots,n-1$ and noticing that $F(m,n-1)=0$ for $m\leqslant n-1$, we arrive at
\begin{align}
\sum_{m=0}^{n-1}F(m,0)=\sum_{k=1}^{n-1}G(n,k)=\sum_{k=1}^{(n+1)/2}G(n,k).  \label{eq:fm0-gmk}
\end{align}
In view of
$$
{2m\brack m}
=\frac{(q;q^2)_m (-q;q)_m^2}{(q^2;q^2)_m},
$$
the identity \eqref{eq:fm0-gmk} can be written as
\begin{align}
\sum_{m=0}^{n-1}(-1)^n [3m+1]\frac{(q;q^2)_m^3}{(q;q)_m^3}
=\frac{[n]{2n-1\brack n-1}}{(-q;q)_{n-1}}\sum_{k=1}^{(n+1)/2}{2n-2k\brack n-1}\frac{(q;q^2)_{n-k} q^{n+1-2k} }{(q^2;q^2)_{n-k}}.
\label{eq:fn03n+1}
\end{align}

For $1\leqslant k\leqslant (n-1)/2$, we have
\begin{align*}
\frac{(q;q^2)_{n-k}}{(q^2;q^2)_{n-k}}
&=\frac{(1-q^n)(q;q^2)_{(n-1)/2}(q^{n+2};q^2)_{(n-1)/2-k}}{(1-q^{n-1})(q^2;q^2)_{(n-3)/2}(q^{n+1};q^2)_{(n+1)/2-k}} \\[5pt]
&\equiv \frac{(1-q^n)(q;q^2)_{(n-1)/2}(q^2;q^2)_{(n-1)/2-k}}{(1-q^{n-1})(q^2;q^2)_{(n-3)/2}(q;q^2)_{(n+1)/2-k}}  \\
&=\frac{(1-q^n)(q^{n+2-2k};q^2)_{k-1}}{(1-q^{n-1})(q^{n+1-2k};q^2)_{k-1}} \\
&\equiv \frac{(1-q^n)(q^{2-2k};q^2)_{k-1}}{(1-q^{-1})(q^{1-2k};q^2)_{k-1}}
=-\frac{q^k(1-q^n)(q^2;q^2)_{k-1}}{(q;q^2)_{k}} \pmod{\Phi_n(q)^2},
\end{align*}
and
\begin{align*}
{2n-2k+1\brack n}=\prod_{k=1}^{n-2k+1}\frac{1-q^{n+j}}{1-q^{j}}\equiv 1\pmod{\Phi_n(q)}.
\end{align*}
It follows that
\begin{align}
&\sum_{k=1}^{(n+1)/2}{2n-2k\brack n-1}\frac{(q;q^2)_{n-k} q^{n+1-2k} }{(q^2;q^2)_{n-k}} \notag\\[5pt]
&\quad=\frac{(q;q^2)_{(n-1)/2}}{(q^2;q^2)_{(n-1)/2}}+\sum_{k=1}^{(n-1)/2}{2n-2k\brack n-1}\frac{(q;q^2)_{n-k} q^{n+1-2k} }{(q^2;q^2)_{n-k}} \notag\\[5pt]
&\quad={n-1\brack \frac{n-1}{2}}_{q^2}\frac{1}{(-q;q)_{n-1}}
+\sum_{k=1}^{(n-1)/2}{2n-2k+1\brack n}\frac{(1-q^n)(q;q^2)_{n-k} q^{n+1-2k} }{(1-q^{2n-2k+1})(q^2;q^2)_{n-k}} \notag\\[5pt]
&\quad\equiv (-1)^{(n-1)/2}q^{(1-n^2)/4}\left((-q;q)_{n-1}-\frac{(n^2-1)(1-q)^2}{24}[n]^2\right)  \notag\\[5pt]
&\quad\quad+[n]^2\sum_{k=1}^{(n-1)/2}\frac{q^k  (q^2;q^2)_k}{[2k][2k-1] (q;q^2)_k} \pmod{\Phi_n(q)^3}, \label{eq:2nk}
\end{align}
where we have used Lemmas \ref{lem:second} and \ref{lem:third} in the last step.

By Lemmas \ref{lem:first} and \ref{lem:third}, we have
\begin{align}
\frac{{2n-1\brack n-1}}{(-q;q)_{n-1}} \equiv \frac{(-1)^{n-1}q^{n\choose 2}}{(-q;q)_{n-1}}+\frac{(n^2-1)(1-q)^2}{12}[n]^2
 \pmod{\Phi_n(q)^3}.   \label{eq:2nk-2}
\end{align}
Substituting \eqref{eq:2nk} and \eqref{eq:2nk-2}  into \eqref{eq:fn03n+1} and making some simplifications, we immediately obtain
\eqref{eq:main-2}.

\section{Concluding remarks and open problems}
From \eqref{eq:Sun} and \eqref{eq:div-3-2} one sees that, for any prime $p>3$,
\begin{align}
\sum^{m}_{k=0}(-1)^k(4k+1)\frac{(\frac{1}{2})_k^3}{k!^3}
\equiv \sum_{k=0}^{p-1}(-1)^k (3k+1)\frac{(\frac{1}{2})_k^3}{k!^3} 2^{3k}\pmod{p^4}, \label{eq:sun-combin}
\end{align}
where $m=p-1$ or $(p-1)/2$. Further, combining \eqref{eq:main-1} and \eqref{eq:main-2}, we have the following  $q$-analogue of \eqref{eq:sun-combin}:
for any positive odd integer $n$,
\begin{equation}
\sum_{k=0}^{N}
(-1)^k q^{k^2} [4k+1]\frac{(q;q^2)_k^3}{(q^2;q^2)_k^3}
\equiv
\sum_{k=0}^{n-1}
(-1)^k [3k+1]\frac{(q;q^2)_k^3}{(q;q)_k^3} \pmod{[n]\Phi_n(q)^3},  \label{eq:q4k-3k}
\end{equation}
where $N=(n-1)/2$ or $n-1$.

We now provide a conjectural parametric generalization of \eqref{eq:q4k-3k}.
\begin{conjecture}Let $n$ be a positive odd integer. Then, modulo $[n]\Phi_n(q)(1-aq^n)(a-q^n)$,
\begin{align}
&\sum_{k=0}^{N}
(-1)^k q^{k^2} [4k+1]\frac{(aq;q^2)_k (q/a;q^2)_k (q;q^2)_k }{(aq^2;q^2)_k (q^2/a;q^2)_k (q^2;q^2)_k } \notag\\[5pt]
&\quad\equiv
\sum_{k=0}^{n-1}
(-1)^k [3k+1]\frac{(aq;q^2)_k (q/a;q^2)_k (q;q^2)_k }{(aq;q)_k (q/a;q)_k (q;q)_k },  \label{eq:q4k-3k-a}
\end{align}
where $N=(n-1)/2$ or $n-1$.
\end{conjecture}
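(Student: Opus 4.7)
Plan: I would follow the creative microscoping method of Guo and Zudilin. Viewed as rational functions of $a$ over $\mathbb{Q}(q)$, the modulus $[n]\Phi_n(q)(1-aq^n)(a-q^n)$ factors into three pairwise coprime pieces: the $a$-free scalar $[n]\Phi_n(q)$ and the two coprime linear factors $1-aq^n$ and $a-q^n$. By the Chinese Remainder Theorem it is enough to verify the congruence modulo each factor separately. Crucially, the involution $a\mapsto 1/a$ preserves each summand on both sides (it swaps the pair $(aq;q^2)_k,(q/a;q^2)_k$ and similarly the denominator pairs), while it swaps the factors $1-aq^n$ and $a-q^n$. Hence the congruence modulo $1-aq^n$ automatically entails the congruence modulo $a-q^n$, and only two cases need to be handled.

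For the modulus $1-aq^n$, I would substitute $a=q^{-n}$. Then $(aq;q^2)_k=(q^{1-n};q^2)_k$ contains the factor $1-q^0$ from $k=(n+1)/2$ onward, so both sides truncate at $k=(n-1)/2$, regardless of whether $N=(n-1)/2$ or $n-1$. The assertion becomes an exact equality of two terminating $q$-hypergeometric sums as rational functions of $q$: a well-poised $_6\phi_5$-type sum on the LHS and a $_4\phi_3$-type sum on the RHS. I would try to derive this from a specialization of Watson's $_8\phi_7$ transformation, a Bailey-chain manipulation, or a quadratic base-changing transformation between base $q$ and base $q^2$ series; if no classical identity applies cleanly, I would construct a bespoke $q$-WZ certificate tailored to this specialization.

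For the modulus $[n]\Phi_n(q)$, I would extend the $q$-WZ pairs $F(m,k),G(m,k)$ of Sections 2 and 3 to $a$-parametric versions $F_a(m,k),G_a(m,k)$. Telescoping over $m$ and $k$ would yield $a$-parametric closed forms for both sides modulo $[n]\Phi_n(q)$; their coincidence is essentially the $a=1$ statement \eqref{eq:q4k-3k} together with a routine tracking of the $a$-dependence using Lemmas \ref{lem:first}--\ref{lem:third} (noting that $q^n\equiv 1\pmod{\Phi_n(q)}$ implies $aq^n\equiv a$ and $q^n/a\equiv 1/a$ on this piece, which preserves the structure of the identity). Combining the three partial congruences via CRT then yields the conjecture.

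The principal obstacle is Step 1: promoting the known $a=1$ congruence to an \emph{exact} rational-function identity at the specialization $a=q^{-n}$, between sums of rather different structural form (quadratic base $q^2$ with weight $q^{k^2}[4k+1]$ versus base $q$ with weight $[3k+1]$). The congruence at $a=1$ strongly suggests such an identity exists, but producing it on the nose (rather than up to a cyclotomic modulus) is delicate. Identifying the right classical transformation or supplying the explicit $q$-WZ certificate is where genuine new input is required; the remaining steps should be essentially mechanical given the machinery already developed in the paper.
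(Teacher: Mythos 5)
This statement is an open conjecture in the paper, not a theorem: the author explicitly records that the left-hand side of \eqref{eq:q4k-3k-a} is known (from \cite[Theorem 5.3]{Guo-mod4}) to be congruent to an explicit expression modulo $[n]\Phi_n(q)(1-aq^n)(a-q^n)$, but that "it seems rather difficult to prove that the right-hand side of \eqref{eq:q4k-3k-a} is also congruent to the above expression.'' So there is no proof in the paper to compare against, and your submission must be judged on whether it actually closes the problem. It does not. Your overall framework --- creative microscoping, the $a\mapsto 1/a$ symmetry reducing $a-q^n$ to $1-aq^n$, and the Chinese Remainder Theorem over the coprime factors --- is exactly the standard machinery the author already has at his disposal, and the fact that both sides terminate at $k=(n-1)/2$ when $a=q^{-n}$ is correct. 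But the decisive content is precisely the two steps you defer: (i) an \emph{exact} rational-function identity at $a=q^{-n}$ between the well-poised base-$q^2$ sum with weight $(-1)^kq^{k^2}[4k+1]$ and the "divergent'' base-$q$ sum with weight $(-1)^k[3k+1]$, for which you offer only a list of places one might look (Watson's transformation, Bailey chains, a bespoke $q$-WZ certificate); and (ii) the congruence of the right-hand side modulo $[n]\Phi_n(q)$ for generic $a$, which you call "a routine tracking of the $a$-dependence'' --- it is not routine, since $[n]\Phi_n(q)$ contains $\Phi_n(q)^2$ as well as $\Phi_d(q)$ for every divisor $d>1$ of $n$, and the reduction $aq^n\equiv a\pmod{\Phi_n(q)}$ only controls the first power. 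Indeed, the closest available result, the three-parametric generalization of \eqref{eq:q-Zudilin-3} in \cite[Theorem 6.1]{GS}, still does not suffice here, which is exactly why the statement remains conjectural.

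To your credit, you flag the missing identity yourself as "where genuine new input is required.'' That honesty is the correct assessment: what you have written is a plausible attack plan whose skeleton coincides with the method the author used for the left-hand side, but with the load-bearing step absent it is not a proof, and the statement should continue to be regarded as open.
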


Using the `creative microscoping' method introduced in \cite{GuoZu} and the Chinese remainder theorem for relatively prime polynomials,
the author \cite[Theorem 5.3]{Guo-mod4} has shown that the left-hand side of \eqref{eq:q4k-3k-a}
is congruent to
\begin{align*}
&(-1)^{(n-1)/2}q^{(n-1)^2/4}[n]+(-1)^{(n-1)/2}q^{(n-1)^2/4}\frac{(1-aq^n)(a-q^n)}{(1-a)^2}[n]  \\[5pt]
&\quad{}-\frac{(1-aq^n)(a-q^n)}{(1-a)^2} [n]\sum_{k=0}^{(n-1)/2}\frac{(q;q^2)_k^2}{(aq^2;q^2)_k(q^2/a;q^2)_k}
\end{align*}
modulo $[n]\Phi_n(q)(1-aq^n)(a-q^n)$. But it seems rather difficult to prove that the right-hand side of
\eqref{eq:q4k-3k-a} is also congruent to the above expression, though a three-parametric generalization of
\eqref{eq:q-Zudilin-3} was already proved by the author and Schlosser \cite[Theorem 6.1]{GS} (see also \cite[Conjecture 4.6]{GuoZu}).

In 2011, Sun \cite{Sun0} studied many interesting supercongruences related to Euler numbers. In particular, Sun \cite[Theorems 1.1 and 1.2]{Sun0} proved that, for any prime $p>3$,
\begin{align}
\sum_{k=0}^{p-1}\frac{1}{2^k}{2k\choose k}
&\equiv (-1)^{(p-1)/2}-p^2 E_{p-3} \pmod{p^3}, \label{eq:Sun-2}\\[5pt]
\sum_{k=0}^{(p-1)/2}\frac{1}{16^k}{2k\choose k}^2
&\equiv (-1)^{(p-1)/2}+p^2 E_{p-3} \pmod{p^3}, \label{eq:Sun-3}\\[5pt]
\sum_{k=0}^{p-1}\frac{1}{16^k}{2k\choose k}^2
&\equiv (-1)^{(p-1)/2}-p^2 E_{p-3} \pmod{p^3}.  \label{eq:Sun-4}
\end{align}
Recently, Wei \cite[Theorem 1.1 with $c=q$, $d\to\infty$]{Wei} gave the following result: for any positive odd integer $n$,
and $N=(n-1)/2$ or $n-1$,
\begin{align}
\sum_{k=0}^{N}
(-1)^k q^{k^2} [4k+1]\frac{(q;q^2)_k^3}{(q^2;q^2)_k^3}
&\equiv
q^{(1-n)/2}\left([n]+\frac{(n^2-1)(1-q)^2}{24}[n]^3\right) \notag\\[5pt]
&\quad\times\sum_{k=0}^{(n-1)/2}
\frac{(q;q^2)_k^2}{(q^2;q^2)_k^2} q^{2k}  \pmod{[n]\Phi_n(q)^3},  \label{eq:q4k-3k}
\end{align}
which is clearly a $q$-analogue of the relation between \eqref{eq:Sun} and \eqref{eq:Sun-3}: for any prime $p>3$,
\begin{align*}
\sum^{m}_{k=0}(-1)^k(4k+1)\frac{(\frac{1}{2})_k^3}{k!^3}
\equiv p\sum_{k=0}^{(p-1)/2}\frac{1}{16^k}{2k\choose k}^2\pmod{p^4}.
\end{align*}
where $m=p-1$ or $(p-1)/2$.

Note that the author, Pan and Zhang \cite{GPZ} gave the following $q$-supercongruence:
\begin{align}
\sum_{k=0}^{(n-1)/2}\frac{(q;q^2)_k^2}{(q^2;q^2)_k^2}q^{2k}\equiv (-1)^{(n-1)/2}q^{(n^2-1)/4} \pmod{\Phi_n(q)^2}  \label{eq:guozeng}
\end{align}
(see \cite{GZ14} for $n$ being a prime). Hence, the $q$-supercongruence \eqref{eq:q4k-3k} may be written as
\begin{align}
& q^{(n-1)/2}\sum_{k=0}^{N}
(-1)^k q^{k^2} [4k+1]\frac{(q;q^2)_k^3}{(q^2;q^2)_k^3} \notag\\[5pt]
&\equiv
[n]\sum_{k=0}^{(n-1)/2}
\frac{(q;q^2)_k^2}{(q^2;q^2)_k^2} q^{2k}
+\frac{(n^2-1)(1-q)^2}{24}[n]^3 (-1)^{(n-1)/2}q^{(n^2-1)/4} \pmod{[n]\Phi_n(q)^3},  \label{eq:q4k-3k-2}
\end{align}
Combining \eqref{eq:main-1} and \eqref{eq:q4k-3k-2} we immediately obtain a $q$-analogue of \eqref{eq:Sun-3}.
\begin{corollary}  \label{cor:one}
Let $n$ be a positive odd integer. Then
\begin{align*}
\sum_{k=0}^{(n-1)/2}
\frac{(q;q^2)_k^2}{(q^2;q^2)_k^2} q^{2k}
&\equiv
(-1)^{(n-1)/2}q^{(n^2-1)/4}  \\[5pt]
&\quad+(-1)^{(n-1)/2}\frac{(n^2-1)(1-q)^2}{24}[n]^2 \left(q^{-(n-1)^2/4}-q^{(n^2-1)/4}\right) \\[5pt]
&\quad+ q^{(n-1)/2}[n]^2\sum_{k=1}^{(n-1)/2}\frac{q^k (q^2;q^2)_{k}}{[2k][2k-1](q;q^2)_{k}} \pmod{\Phi_n(q)^3}.
\end{align*}
\end{corollary}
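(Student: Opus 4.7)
The plan is to derive the corollary purely by algebraic manipulation of the two $q$-congruences \eqref{eq:main-1} and \eqref{eq:q4k-3k-2}, since both describe the same sum $S(q):=\sum_{k=0}^{N}(-1)^k q^{k^2}[4k+1](q;q^2)_k^3/(q^2;q^2)_k^3$ modulo $[n]\Phi_n(q)^3$. First I would multiply the congruence \eqref{eq:main-1} through by $q^{(n-1)/2}$ so that its left-hand side matches the left-hand side of \eqref{eq:q4k-3k-2}, namely $q^{(n-1)/2}S(q)$. Next I would set the two right-hand sides equal modulo $[n]\Phi_n(q)^3$, obtaining a relation in which the unknown quantity $[n]\sum_{k=0}^{(n-1)/2}(q;q^2)_k^2 q^{2k}/(q^2;q^2)_k^2$ appears linearly and is solved for.

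The crucial arithmetic observation is that every other term in this resulting identity carries an explicit factor of $[n]$. After isolating $[n]\sum_{k=0}^{(n-1)/2}(q;q^2)_k^2 q^{2k}/(q^2;q^2)_k^2$ on one side, one can legitimately cancel the factor of $[n]$ and pass from a congruence modulo $[n]\Phi_n(q)^3$ to one modulo $\Phi_n(q)^3$; this is the standard step that justifies dropping $[n]$ from both sides once both sides are divisible by it.

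Then I would simplify the $q$-powers. Using $\binom{n}{2}=n(n-1)/2$, one checks the identities
\begin{align*}
\tfrac{n-1}{2}+\tfrac{1-n^2}{4}+\binom{n}{2} &= \tfrac{n^2-1}{4},\\
\tfrac{n-1}{2}+\tfrac{1-n^2}{4} &= -\tfrac{(n-1)^2}{4},
\end{align*}
so the leading constant term from the multiplied \eqref{eq:main-1} becomes $(-1)^{(n-1)/2}q^{(n^2-1)/4}$, and the $\binom{n^2-1}{24}[n]^2$-coefficient becomes $(-1)^{(n-1)/2}q^{-(n-1)^2/4}$. Combining this with the $-(-1)^{(n-1)/2}q^{(n^2-1)/4}$ coming from moving the corresponding term of \eqref{eq:q4k-3k-2} to the other side produces the factor $q^{-(n-1)^2/4}-q^{(n^2-1)/4}$ displayed in the corollary. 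The remaining sum $\sum_{k=1}^{(n-1)/2} q^k(q^2;q^2)_k/([2k][2k-1](q;q^2)_k)$ is simply transferred with its extra factor $q^{(n-1)/2}[n]^2$.

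There is no genuine obstacle here: the argument is entirely formal, and the only place requiring care is the exponent bookkeeping in the simplification of the constant and $[n]^2$ terms, plus the verification that dividing through by $[n]$ is legitimate at the claimed level of precision. Once those are in hand, the statement of Corollary \ref{cor:one} follows immediately.
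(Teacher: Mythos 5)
Your proposal is correct and follows exactly the paper's route: the paper obtains Corollary \ref{cor:one} by ``combining \eqref{eq:main-1} and \eqref{eq:q4k-3k-2}'', which is precisely your computation of equating the two expressions for $q^{(n-1)/2}\sum_{k=0}^{N}(-1)^kq^{k^2}[4k+1](q;q^2)_k^3/(q^2;q^2)_k^3$, cancelling the common factor $[n]$ to pass from modulus $[n]\Phi_n(q)^3$ to $\Phi_n(q)^3$, and simplifying the exponents via $\tfrac{n-1}{2}+\tfrac{1-n^2}{4}+\binom{n}{2}=\tfrac{n^2-1}{4}$ and $\tfrac{n-1}{2}+\tfrac{1-n^2}{4}=-\tfrac{(n-1)^2}{4}$. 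Your exponent bookkeeping checks out, so nothing further is needed.
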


However, to the best of the author's knowledge,
no $q$-analogues of \eqref{eq:Sun-2} and \eqref{eq:Sun-4}, even conjectural, are known in the literature.
It follows from \eqref{eq:Sun-2} and \eqref{eq:Sun-4} that, for any odd prime $p$,
\begin{equation}
\sum_{k=0}^{p-1}\frac{1}{2^k}{2k\choose k}
\equiv
\sum_{k=0}^{p-1}\frac{1}{16^k}{2k\choose k}^2  \pmod{p^3}.  \label{eq:Sun2-4}
\end{equation}
On the other hand, for odd $n$, the author \cite{Guo-t} proved that
\begin{align}
\sum_{k=0}^{n-1}\frac{q^k}{(-q;q)_{k}}{2k\brack k}\equiv (-1)^{(n-1)/2}q^{(n^2-1)/4} \pmod{\Phi_n(q)^2}, \label{eq:q-tauraso}
\end{align}
confirming a conjecture of Tauraso \cite{Tauraso}.

In light of \eqref{eq:guozeng} and \eqref{eq:q-tauraso}, we believe that the following $q$-analogue of \eqref{eq:Sun2-4} is true.
\begin{conjecture}
Let $n$ be a positive odd integer. Then
\begin{align*}
\sum_{k=0}^{n-1}\frac{q^k}{(-q;q)_k}{2k\brack k} \equiv
\sum_{k=0}^{n-1}\frac{(q;q^2)_k^2}{(q^2;q^2)_k^2}q^{2k}
\pmod{\Phi_n(q)^3}.
\end{align*}
\end{conjecture}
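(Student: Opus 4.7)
My plan is to reduce the conjecture to a comparison of refined closed-form expressions for the two sides modulo $\Phi_n(q)^3$. By \eqref{eq:q-tauraso} and \eqref{eq:guozeng}, both sums are already known to be congruent to $(-1)^{(n-1)/2}q^{(n^2-1)/4}$ modulo $\Phi_n(q)^2$, so the task reduces to computing the $\Phi_n(q)^2$-order correction on each side and verifying that these corrections coincide.

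For the right-hand side, Corollary \ref{cor:one} already supplies the mod $\Phi_n(q)^3$ evaluation of the partial sum $\sum_{k=0}^{(n-1)/2}$. To upgrade this to the full sum $\sum_{k=0}^{n-1}$, I would exploit the observation that for $(n+1)/2\leqslant k\leqslant n-1$ the factor $(q;q^2)_k=\prod_{j=1}^{k}(1-q^{2j-1})$ contains exactly one copy of $1-q^n$ (from $j=(n+1)/2$), so $(q;q^2)_k^2$ absorbs a full $\Phi_n(q)^2$, while $(q^2;q^2)_k^2$ is coprime to $\Phi_n(q)$ throughout this range since $n$ is odd. Consequently, the tail $\sum_{k=(n+1)/2}^{n-1}\frac{(q;q^2)_k^2}{(q^2;q^2)_k^2}q^{2k}$ need only be evaluated modulo $\Phi_n(q)$, which is a routine reduction using $q^n\equiv 1\pmod{\Phi_n(q)}$ together with the index reversal $2j-1\mapsto 2(k-j+1)-1$ inside the product. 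The resulting residue should be collapsible into the Euler-type sum already present in Corollary \ref{cor:one}.

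The substantive difficulty lies on the left-hand side: one must sharpen \eqref{eq:q-tauraso} from mod $\Phi_n(q)^2$ to mod $\Phi_n(q)^3$. The natural tool is the $q$-WZ method used throughout Sections 2 and 3 of this paper. I would seek a pair $(F(m,k),G(m,k))$ of rational functions in $q$ satisfying $F(m,k-1)-F(m,k)=G(m+1,k)-G(m,k)$ with $F(m,0)=\frac{q^m}{(-q;q)_m}{2m\brack m}$, then sum over $m$ from $0$ to $n-1$, telescope in $k$, and evaluate the surviving boundary term $\sum_{k}G(n,k)$ modulo $\Phi_n(q)^3$ via Lemmas \ref{lem:first}--\ref{lem:third}, in the same spirit as the proof of Theorem \ref{thm:2}. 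An alternative route is the creative microscoping method of \cite{GuoZu}: introduce an extra parameter $a$, establish a parametric congruence modulo $\Phi_n(q)(1-aq^n)(a-q^n)$, and specialize $a=1$, so that the vanishing factor $(1-aq^n)(a-q^n)|_{a=1}=(1-q^n)^2$ contributes the desired extra $\Phi_n(q)^2$.

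The main obstacle is identifying the correct $q$-WZ pair (or parametric deformation) for the LHS. The author explicitly remarks after \eqref{eq:Sun-4} that no $q$-analogue of \eqref{eq:Sun-2} is known, even conjecturally, so the classical literature does not suggest an obvious candidate $F(m,k)$; any valid choice must, after the boundary reduction, produce precisely the Euler-number correction $[n]^2\sum_{k=1}^{(n-1)/2}\frac{q^k(q^2;q^2)_k}{[2k][2k-1](q;q^2)_k}$ so as to match the refined right-hand side, thereby furnishing a $q$-analogue of \eqref{eq:Sun-2} as a byproduct. Once such a pair is exhibited, the final comparison of the two closed forms modulo $\Phi_n(q)^3$ should be a direct calculation using the cyclotomic reductions already developed in Sections 2 and 3.
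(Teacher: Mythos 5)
The statement you are addressing is stated in the paper as an open conjecture: the author offers no proof, only the heuristic support that, by \eqref{eq:guozeng} and \eqref{eq:q-tauraso}, both sides agree modulo $\Phi_n(q)^2$, and explicitly remarks that no $q$-analogue of \eqref{eq:Sun-2} --- which is exactly the modulo-$\Phi_n(q)^3$ evaluation of the left-hand side that your plan requires --- is known even conjecturally. Your proposal is therefore a strategy outline rather than a proof, and it contains a genuine gap at precisely the decisive point: you do not exhibit the $q$-WZ pair $(F,G)$ with $F(m,0)=\frac{q^m}{(-q;q)_m}{2m\brack m}$, nor a parametric deformation amenable to creative microscoping, and you acknowledge that no candidate is suggested by the literature. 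Without that object the telescoping-and-boundary-evaluation machinery of Sections 2 and 3 cannot be set in motion, so the left-hand side is never actually computed modulo $\Phi_n(q)^3$ and the final comparison cannot be performed. Announcing that the eventual boundary sum ``must produce precisely the Euler-number correction $[n]^2\sum_{k=1}^{(n-1)/2}\frac{q^k(q^2;q^2)_k}{[2k][2k-1](q;q^2)_k}$'' is assuming the conclusion, not deriving it.

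The parts of your plan that concern the right-hand side are sound as far as they go: Corollary \ref{cor:one} does give the modulo-$\Phi_n(q)^3$ value of the partial sum to $(n-1)/2$, and your observation that for $(n+1)/2\leqslant k\leqslant n-1$ the factor $(q;q^2)_k$ contains exactly one copy of $1-q^n$ while $(q^2;q^2)_k$ is coprime to $\Phi_n(q)$ is correct, so the tail indeed only needs a modulo-$\Phi_n(q)$ evaluation (consistent with the sign flip of the $E_{p-3}$ term between \eqref{eq:Sun-3} and \eqref{eq:Sun-4}). But even granting that reduction, the argument cannot close without the missing left-hand-side refinement, which is the substantive open problem here and is why the paper records the statement as a conjecture rather than a theorem.
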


\end{document}